\renewcommand{\leq}{\leqslant}
\renewcommand{\geq}{\geqslant}
\newtheorem{thm}{Theorem}
\newtheorem{lem}[thm]{Lemma}
\theoremstyle{remark}
\definecolor{pink}{RGB}{219, 48, 122}
\title{A note on the 1-2-3 Theorem for infinite graphs}
\author{Marcin Stawiski \\ stawiski@agh.edu.pl}
\affil{AGH University,\\ Faculty of Applied Mathematics, \protect\\al. Mickiewicza 30, 30-059 Krakow, Poland}
\begin{document}

\maketitle
\begin{abstract}

Karoński, Łuczak and Thomason conjectured in 2004 that for every finite graph without isolated edge, the edges can be assigned weights from  $\{1,2,3\}$ in such a way that the endvertices of each edge have different sums of incident edge weights. This is known as the 1-2-3 Conjecture, and it was only recently proved by Keusch. We extend this result to infinite graphs in the following way. If $G$ is a graph without isolated edge, then the edges can be assigned weights from  $\{1,2,3\}$ is such a way that the endvertices of each edge have different sum of incident edge weights, or these endvertices have both the same infinite degree. We also investigate the extensions of theorems about total and list versions of 1-2-3 Conjecture to infinite graphs.

\bigskip\noindent \textbf{Keywords}: 1-2-3 Conjecture, weighted degree of a vertex, infinite graphs, colourings

\noindent {\bf \small Mathematics Subject Classifications}: 05C15, 05C78
\end{abstract}

\section{Introduction}

Let $G=(V,E)$ be a finite or infinite graph. Let $k$ be a natural number. A \emph{k-edge weighting} is a function $\omega\colon E \rightarrow \{1, \dots, k \}$. The \emph{weighted degree} of a vertex $v$ of $G$ is the sum $s_\omega(v)=\Sigma_{w\in N(v)}\omega(vw)$. We say that $\omega$ is a k-\emph{weighting} of $G$ if it is a k-edge weighting such that  the endvertices of each edge have different weighted degrees. Karoński, Łuczak, and Thomason \cite{KLT} in 2004 conjectured that each finite graph without isolated edge  has a 3-weighting. This is known as the 1-2-3 Conjecture. This conjecture has gained a lot of attention since it was formulated. There are many partial result regarding this conjecture and similar problems. The 1-2-3 Conjecture was only recently proved in its full generality by Keusch.

\begin{thm}[Keusch \cite{keusch}]\label{thm:Keusch}Let $G$ be a finite graph without isolated edge, then there exists a $3$-weighting of $G$.
\end{thm}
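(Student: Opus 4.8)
\emph{Approach.} Theorem~\ref{thm:Keusch} is precisely the finite 1-2-3 Conjecture, and for this note it is used as a black box; I nevertheless sketch the shape a proof must take, since some of its ingredients resurface in the infinite setting. Keusch's argument builds on the Kalkowski--Karoński--Pfender technique (which produced a weighting with values in $\{1,\dots,5\}$) and on the earlier partial results lowering the number of weights needed; at its core is an iterative, vertex-by-vertex weighting. One first reduces to connected $G$, since a weighting is assembled component by component and isolated vertices impose no constraint; pendant vertices, trees, and pendant paths are disposed of directly (a vertex of degree $1$ automatically has smaller weighted degree than its degree-$\geq 2$ neighbour), so that one is left with connected graphs of minimum degree at least $2$, where the real difficulty lies.

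For such $G$, fix a carefully chosen linear order $v_1,v_2,\dots,v_n$ of $V(G)$ --- say one arising from an ear decomposition, so that all but a few vertices have a neighbour that appears later --- and process the vertices in this order, keeping every edge weight provisionally equal to $1$ until a decision is forced. When $v_i$ is processed, freeze the weights of the edges from $v_i$ to its already-processed neighbours so that $s_\omega(v_i)$ avoids the (already frozen) weighted degrees of those neighbours, leaving the edges to later neighbours free for future adjustment. The obstacle is immediate: $v_i$ may have many backward neighbours, each forbidding one value of $s_\omega(v_i)$, while each incident edge can shift $s_\omega(v_i)$ by only $0$, $1$, or $2$, so a naive single pass does not leave enough room. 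Removing this obstruction is exactly Keusch's contribution, and the step where I would expect essentially all of the effort to be spent.

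Concretely, the heart of the matter is a combinatorial lemma ensuring that the weighted degrees can be steered past all forbidden values using only the minuscule per-edge slack afforded by $\{1,2,3\}$. This is achieved through a subtle multi-phase weighting together with an invariant that, for each vertex, controls the set of still-reachable weighted degrees (a short interval, or union of arithmetic progressions, that shrinks in a controlled way as that vertex's neighbours are processed); the crux is to show that this invariant survives globally, so that even the last and most constrained vertices retain a legal value. Everything else --- the reductions above and the bookkeeping of provisional weights --- is routine by comparison. For the present note we simply invoke the theorem in the stated form.
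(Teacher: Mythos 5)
The paper does not prove this statement at all: Theorem~\ref{thm:Keusch} is imported verbatim from Keusch \cite{keusch} and used as a black box, which is exactly how you treat it, so your approach coincides with the paper's. Your accompanying sketch is only an impressionistic description of how such a proof might go (and by your own admission omits the key lemma where all the difficulty lies), so it should not be mistaken for a proof --- but none is required here, and invoking the cited result in the stated form is the correct move.
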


In this note we are interested in extending theorem of Kusch to infinite graphs in the most general way possible. Notice that the weighted degree of a vertex of infinite degree is always equal to its degree. Hence, we cannot obtain different sums between vertices of the same infinite degree. The main result of this paper is the following theorem.

\begin{thm}\label{thm}
    Let $G$ be a graph without isolated edge, then there exists a $3$-weighting of $G$.
\end{thm}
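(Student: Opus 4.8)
The plan is to deduce Theorem~\ref{thm} from Theorem~\ref{thm:Keusch} by a compactness argument, once one observes that only finitely many edges at a time carry a genuine constraint. Let $F$ be the set of vertices of $G$ of finite degree and $I=V\setminus F$ the set of vertices of infinite degree. For every $u\in F$ and every edge weighting $\omega$ the value $s_\omega(u)$ is a finite nonnegative integer (a sum over finitely many incident edges), while for $v\in I$ we have $s_\omega(v)=\deg(v)$, an infinite cardinal. Hence, for any edge $uv$ with $u\in F$ and $v\in I$ the weighted degrees differ automatically; and for an edge $uv$ with $u,v\in I$ there is nothing to do --- either $u$ and $v$ share the same infinite degree, which is the exceptional clause, or their infinite degrees differ, in which case $s_\omega(u)=\deg(u)\neq\deg(v)=s_\omega(v)$. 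So it is enough to find $\omega\colon E\to\{1,2,3\}$ with $s_\omega(u)\neq s_\omega(v)$ for every edge $uv$ having both endpoints in $F$.

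The point is that for such an edge the numbers $s_\omega(u)$ and $s_\omega(v)$ depend only on the (finitely many) edges incident to $u$ or to $v$. I would therefore put the product topology on $X=\{1,2,3\}^{E}$, which is compact by Tychonoff's theorem, and for each edge $uv$ with $u,v\in F$ set $A_{uv}=\{\omega\in X:\ s_\omega(u)\neq s_\omega(v)\}$. Each $A_{uv}$ depends on finitely many coordinates, hence is clopen, in particular closed. It then suffices to show that the family $\{A_{uv}\}$ has the finite intersection property: then $\bigcap_{uv}A_{uv}\neq\emptyset$ by compactness, and by the reduction above any $\omega$ in this intersection is a $3$-weighting of $G$.

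To establish the finite intersection property, fix finitely many edges $u_1v_1,\dots,u_mv_m$ with all endpoints in $F$, set $W=\{u_1,v_1,\dots,u_m,v_m\}$, and let $H$ be the subgraph of $G$ formed by all edges incident to $W$; since each vertex of $W$ has finite degree, $H$ is finite. Let $H_0$ arise from $H$ by deleting all of its isolated edges; then $H_0$ has no isolated edge. Each $u_iv_i$ still lies in $H_0$, because if $u_iv_i$ were isolated in $H$ then, as $H$ contains every $G$-edge at $u_i$ and at $v_i$, both $u_i$ and $v_i$ would have degree $1$ in $G$, so $u_iv_i$ would be an isolated edge of $G$ --- excluded. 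By the same reasoning no $G$-edge incident to a vertex of $W$ is deleted, so for each $w\in W$ the weighted degree of $w$ in $H_0$ equals its weighted degree in $G$ under any extension of a weighting of $E(H_0)$. Now apply Theorem~\ref{thm:Keusch} to $H_0$ to obtain a weighting of $E(H_0)$ separating the weighted degrees across every edge of $H_0$, in particular across $u_1v_1,\dots,u_mv_m$; extend it arbitrarily to $E\setminus E(H_0)$ to get a point of $\bigcap_{i=1}^m A_{u_iv_i}$.

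I expect the main obstacle to be conceptual rather than computational: one must recognise that, even though the weighted degree of an infinite-degree vertex is a genuinely infinite sum, no edge incident to such a vertex constrains the weighting, so the problem is effectively finitary and a standard compactness argument applies. The remaining work --- trimming the finite auxiliary graphs $H$ to remove isolated edges so that Theorem~\ref{thm:Keusch} is applicable, while checking that this trimming and the subsequent arbitrary extension do not disturb the weighted degrees of the vertices we care about --- is routine but should be carried out with care.
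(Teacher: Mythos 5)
Your proof is correct, but it takes a genuinely different route from the paper's. The paper derives Theorem~\ref{thm} from a general transfer principle (Theorem~\ref{thm:general}): it first deletes all edges joining two vertices of infinite degree, then splits each remaining infinite-degree vertex into infinitely many degree-one copies so as to reduce to the locally finite case, and finally runs a K\H{o}nig's Lemma argument on the balls $B(v,i)$ within each component. You instead observe up front that an edge incident to a vertex of infinite degree imposes no constraint at all, so only edges with both endpoints of finite degree matter; each such constraint depends on finitely many coordinates of $\{1,2,3\}^{E}$, so Tychonoff compactness reduces everything to the finite intersection property, which you verify by applying Theorem~\ref{thm:Keusch} to the finite subgraph spanned by the edges at the relevant vertices. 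Your trimming step is the one place that needs care and you handle it correctly: since every $G$-edge at a vertex of $W$ lies in $H$, an isolated edge of $H$ meeting $W$ would be an isolated edge of $G$, so nothing incident to $W$ is deleted and the weighted degrees of the vertices of $W$ are computed identically in $H_0$ and in $G$. Your argument is more self-contained, avoids the vertex-splitting construction and the component-by-component reduction, and works uniformly for graphs of arbitrary cardinality. What the paper's route buys is generality: Theorem~\ref{thm:general} is formulated for arbitrary lists and arbitrary finite base theorems, so one lemma also yields Theorems~\ref{thm:list} and~\ref{thm:list1} --- although your compactness argument would transfer to that setting with no essential change, since $\prod_{e}L_e\times\prod_{v}L_v$ is still a compact product of finite sets. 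One small remark: as literally defined in the introduction a $3$-weighting carries no exceptional clause for two endpoints of the same infinite degree; you correctly read the statement with that clause, which the abstract and the definition of $L$-weighting make clear is what is intended.
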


The 1-2-3 Conjecture was also studied in different variants. In particular, it was studied for total weightings \cite{przybylowozniak12, wongzhu2}, and in the list version \cite{wongzhu2,wongzhu1,zhu1}.   The most general of these versions is for total list weightings. Let $L_V=\{L_v:v\in V\}$ be a set of lists of weights for vertices of $G$ such that $L(v)\subseteq\{1,\dots,k\}$, and $L_E=\{L_e:e\in E\}$ be a set of lists of weights for edges such that $L(e)\subseteq\{1,\dots, k\}$, and $L=L_V \cup L_E$. We say that a function $\omega\colon V\cup E\rightarrow \{1,\dots, l\}$ is a weighting of $G$ from a set of lists $L$ if $\omega(v)\in L_v$ for every vertex $v$ of $G$, and $\omega(e)\in L_e$ for every edge $e$ of $G$. We define the \emph{weighted degree} of a vertex $v$ by $s_\omega(v)=\Sigma_{w\in N(v)}\omega(vw)+\omega(v)$. We say that a graph $G$ has a $L$-weighting if there exists a weighting $\omega$ of $G$  from a set of lists $L$ such that for every edge of $G$, each of its endvertices have different weighted degree, or both its endvertices have the same infinite degree. This generalizes the previously defined notions, as we can obtain the edge versions by simply setting each $L_v=\{1\}$.
The general theorem proved in this paper allows us to obtain extensions of theorems about lists and total weightings. In particular, we prove the following theorems.
%\begin{thm}\label{thm:total}
%    Let $G$ be a graph. Then there exists a weighting of vertices from $\{1,2\}$ and edges from $\{1,2,3\}$ such that for every edge, both its vertices have different total weighted degree.
%\end{thm}

\begin{thm}\label{thm:list}
    Let $G$ be a graph without isolated edge, and $L$ be a set of lists for $G$ such that each list for a vertex has one element, and each list for an edge has at least five elements. Then there exists a $L$-weighting of $G$.
\end{thm}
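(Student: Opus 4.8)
The plan is to mimic the proof of Theorem~\ref{thm}: set up a compactness argument over the space of candidate weightings, where the finite pieces that arise are handled not by Keusch's Theorem~\ref{thm:Keusch} but by the finite list version of the $1$-$2$-$3$ theorem (vertex lists of size one, edge lists of size five; see \cite{wongzhu1,wongzhu2,zhu1}). Concretely, put the product topology on $\Omega=\prod_{v\in V}L_v\times\prod_{e\in E}L_e$ with each factor discrete; since every list is finite, $\Omega$ is compact by Tychonoff's theorem. (The vertex coordinates are in fact forced, as $|L_v|=1$.) A point of $\Omega$ is a candidate weighting of $G$, and it suffices to find one that is an $L$-weighting.

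First one checks that only finitely many coordinates matter for each condition that is not automatic. If $uv\in E$ and at least one of $u,v$ has infinite degree, then for every $\omega\in\Omega$ one has $s_\omega(u)\ne s_\omega(v)$ unless $\deg u=\deg v$ is infinite, in which case the definition of an $L$-weighting permits the equality; here one uses that the weighted degree of a vertex of infinite degree $\kappa$ equals $\kappa$ for every $\omega$ (a sum of $\kappa$ positive integers together with a finite vertex weight has cardinality $\kappa$), while a vertex of finite degree has a finite weighted degree. Hence the $L$-weightings of $G$ are exactly the points of $\bigcap_{uv}C_{uv}$, the intersection ranging over edges $uv$ whose endvertices both have finite degree, where $C_{uv}=\{\omega\in\Omega:\ s_\omega(u)\ne s_\omega(v)\}$. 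Each $C_{uv}$ depends only on the finitely many coordinates at $u$ and at $v$, so it is clopen, and by compactness it suffices to verify the finite intersection property.

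So fix a finite family $F=\{u_1v_1,\dots,u_nv_n\}$ of edges with all $2n$ endvertices of finite degree, let $W$ be the (finite) set of those endvertices, and let $H$ be the finite subgraph of $G$ consisting of all edges incident to $W$. Observe: (i) no $u_iv_i$ is isolated in $H$, since its endvertices lie in $W$, so were it isolated in $H$ they would have degree one in $G$ and $u_iv_i$ would be an isolated edge of $G$; and (ii) no edge incident to a vertex of $W$ is isolated in $H$, since such an isolated edge would force its $W$-endvertex to have degree one in $G$, hence to have that edge as its only edge, so the edge would be some $u_iv_i$, contradicting (i). Let $H'$ be $H$ with its isolated edges deleted; then $H'$ has no isolated edge, contains every $u_iv_i$, and every $u\in W$ has the same incident edges and vertex weight in $H'$ as in $G$. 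Apply the finite list theorem to $H'$ with the restricted lists to get a weighting $\omega'$ with $s^{H'}_{\omega'}(x)\ne s^{H'}_{\omega'}(y)$ along every edge $xy$ of $H'$, and extend $\omega'$ to $G$ by the forced vertex weights and an arbitrary list element on each edge outside $H'$. For each $i$ both $u_i,v_i\in W$, so $s_\omega(u_i)=s^{H'}_{\omega'}(u_i)\ne s^{H'}_{\omega'}(v_i)=s_\omega(v_i)$; thus $\omega\in C_{u_1v_1}\cap\dots\cap C_{u_nv_n}$, which is the required finite intersection property.

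The substantive step is this finite intersection property, and within it the one delicate point is the pruning from $H$ to $H'$: deleting isolated edges is forced on us by the hypothesis of the finite theorem, and one must ensure this deletion neither removes any $u_iv_i$ nor changes the weighted degree of any vertex of $W$ — which is exactly what observations (i) and (ii) secure. The rest — treating weighted degrees of infinite-degree vertices as cardinals, and the triviality of the vertex coordinates — is routine. The very same scheme, with Keusch's theorem and trivial vertex lists, is the proof of Theorem~\ref{thm}, and substituting the appropriate finite total list theorem yields the total list version.
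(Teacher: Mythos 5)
Your argument is correct, but it follows a genuinely different route from the paper's. The paper obtains Theorem \ref{thm:list} as an instance of a general transfer principle, Theorem \ref{thm:general}: it first proves the locally finite case (Lemma \ref{lem}) by applying K\H{o}nig's Lemma to restrictions of weightings of the balls $H_i=H[B(v,i)]$, and then reduces arbitrary graphs to locally finite ones by deleting every edge joining two vertices of infinite degree and splitting each remaining infinite-degree vertex into infinitely many degree-one copies; Zhu's finite theorem (Theorem \ref{thm:listfinite}) enters only as the hypothesis of Theorem \ref{thm:general} on finite induced subgraphs. You instead run a single Tychonoff compactness argument on the product of the (finite) lists: your observation that every non-automatic constraint concerns an edge both of whose endvertices have finite degree makes each constraint set clopen and dependent on finitely many coordinates, reducing everything to the finite intersection property, which you verify by feeding the finite (not necessarily induced) subgraph spanned by all edges meeting the relevant endvertices into Theorem \ref{thm:listfinite}. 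Your observations (i) and (ii) do exactly the work needed there --- indeed they show the pruning from $H$ to $H'$ is vacuous, since every edge of $H$ meets $W$. What your route buys is uniformity and economy: no locally finite reduction, no vertex-splitting construction, and arbitrary degrees handled in one step. What the paper's route buys is a reusable intermediate statement, Theorem \ref{thm:general}, which it instantiates three times to get Theorems \ref{thm}, \ref{thm:list} and \ref{thm:list1}; your scheme is equally modular, since only the finite-intersection step depends on which finite theorem is invoked. Both treatments of infinite-degree vertices rest on the same cardinality observation.
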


\begin{thm}\label{thm:list1}
    Let $G$ be a graph, and $L$ be a set of lists for $G$ such that each list for a vertex has at least two elements, and each list for an edge has at least three elements. Then there exists a $L$-weighting of $G$.
\end{thm}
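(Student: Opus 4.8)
The plan is to deduce Theorem~\ref{thm:list1} from its finite counterpart — the theorem that every finite graph is $(2,3)$-total-weight-choosable, i.e.\ admits an $L$-weighting whenever every vertex list has at least two elements and every edge list has at least three \cite{wongzhu2} — by a compactness argument. Throughout I may replace each vertex list by a $2$-element subset and each edge list by a $3$-element subset, so that all lists are finite; it suffices to produce a weighting from these smaller lists. I would first isolate which constraints genuinely need to be enforced. If $v$ has infinite degree, then for every list weighting $\omega$ the value $s_\omega(v)$ is the cardinal $\deg(v)$, independently of $\omega$. Hence for an edge $uv$ with $v$ of infinite degree: if $\deg(u)<\aleph_0$ then $s_\omega(u)<\aleph_0\leq s_\omega(v)$; if $\deg(u)$ is infinite and $\deg(u)\ne\deg(v)$ then again $s_\omega(u)\ne s_\omega(v)$; and if $\deg(u)=\deg(v)$ the edge is covered by the exceptional clause in the definition of an $L$-weighting. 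So every edge meeting a vertex of infinite degree is automatically satisfied, and the only requirement left is $s_\omega(u)\ne s_\omega(v)$ for edges $uv\in E$ with both endpoints of finite degree.

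For the compactness step, give $X=\prod_{v\in V}L_v\times\prod_{e\in E}L_e$ the product topology; as a product of finite discrete spaces it is compact by Tychonoff's theorem. For each edge $uv$ with $u$ and $v$ of finite degree put $B_{uv}=\{\omega\in X:s_\omega(u)=s_\omega(v)\}$. Since $\deg(u)$ and $\deg(v)$ are finite, each of the maps $\omega\mapsto s_\omega(u)$ and $\omega\mapsto s_\omega(v)$ depends on only finitely many coordinates of $\omega$, so $B_{uv}$ is clopen and $X\setminus B_{uv}$ is closed. A point of $\bigcap_{uv}(X\setminus B_{uv})$ is precisely an $L$-weighting of $G$, so by the finite intersection property for the compact space $X$ it suffices to show that for every finite set $S$ of such edges there is some $\omega\in X$ with $s_\omega(u)\ne s_\omega(v)$ for all $uv\in S$.

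To prove this, let $W$ be the (finite) set of endpoints of the edges in $S$; each vertex of $W$ has finite degree. Let $H$ be the finite graph whose vertex set is $W$ together with all their neighbours and whose edge set consists of all edges of $G$ incident with $W$, endowed with the shrunk lists inherited from $L$. Applying the finite $(2,3)$-total-weight-choosability theorem to $H$ produces an $L$-weighting $\omega_H$ of $H$ in which adjacent vertices have distinct weighted degrees. Because $H$ contains every edge of $G$ incident with a vertex of $W$, the weighted degree in $H$ of each $w\in W$ equals its weighted degree in $G$ for any extension of $\omega_H$; extending $\omega_H$ arbitrarily within the lists to the remainder of $G$ therefore yields $\omega\in X$ with $s_\omega(u)\ne s_\omega(v)$ for all $uv\in S$. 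This gives the finite intersection property, so $\bigcap_{uv}(X\setminus B_{uv})\ne\emptyset$, and any $\omega$ in this set is the required $L$-weighting.

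The step I expect to require the most care is this finite reduction: $H$ must be chosen large enough that the weighted degrees of the vertices of $W$ are computed correctly in $H$, which forces $H$ to contain the entire neighbourhoods of $W$. In contrast with Theorem~\ref{thm} and Keusch's theorem, no ``isolated edge'' exception intervenes here, because $(2,3)$-total-weight-choosability holds for every finite graph; hence $H$ needs no further enlargement, and all the genuine difficulty is contained in the cited finite theorem.
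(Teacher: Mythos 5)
Your proof is correct, but it follows a genuinely different route from the paper. The paper derives Theorem~\ref{thm:list1} from a general transfer principle (Theorem~\ref{thm:general}), whose proof proceeds in two stages: first a lemma for locally finite graphs, obtained by applying K\H{o}nig's Lemma to the nested sequence of restrictions of weightings of the balls $H_i=H[B(v,i)]$; then a reduction of the general case to the locally finite one by deleting all edges joining two vertices of infinite degree and splitting each remaining vertex of infinite degree into infinitely many degree-one copies. You instead make the same initial observation about infinite-degree vertices (every edge meeting one is automatically satisfied, or falls under the exceptional clause), and then run a single Tychonoff/finite-intersection-property argument on the product of the (shrunk, finite) lists, noting that each remaining constraint $s_\omega(u)\neq s_\omega(v)$ depends on only finitely many coordinates and so defines a clopen set. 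Your finite-reduction step --- taking $H$ to consist of the endpoints $W$ of the finitely many edges in question together with all their neighbours and all edges of $G$ incident with $W$ --- is handled correctly: that choice is exactly what is needed for the weighted degrees of the vertices of $W$ to be computed faithfully in $H$. What your approach buys is economy: it avoids the vertex-splitting construction and the locally-finite intermediate lemma, and Tychonoff replaces K\H{o}nig's Lemma without any countability or local-finiteness hypothesis. What the paper's approach buys is a single reusable statement (Theorem~\ref{thm:general}) phrased in terms of induced subgraphs, which is then instantiated three times, including for Theorems~\ref{thm} and~\ref{thm:list} where the finite input theorems carry an isolated-edge exception; your argument as written exploits the fact that the Wong--Zhu theorem has no such exception, so the auxiliary finite graph $H$ needs no further care, and a direct adaptation to the other two theorems would have to deal with possible $K_2$-components of $H$ separately.
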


Theorems \ref{thm}, \ref{thm:list}, and \ref{thm:list1} shall follow from more general Theorem \ref{thm:general} proved in the next section, and their respective versions for finite graphs.
\begin{thm}\label{thm:general}Let $G$ be a graph without isolated edge, and $L$ be a list of weights for $G$. Let $H$ be a subgraph of $G$. Let $L_H$ be the set of lists obtained from $L$ by restriction to the set $V(H)\cup E(H)$. If each finite induced subgraph $H$ of $G$ not isomorphic to $K_2$ has a $L_H$-weighting, then $G$ has a $L$-weighting.
\end{thm}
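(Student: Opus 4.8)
The plan is to prove Theorem~\ref{thm:general} by a compactness argument over the finite induced subgraphs of $G$. The guiding observation is that the weighted degree of a finite-degree vertex depends on only finitely much of $G$, whereas, as already noted in the introduction, a vertex of infinite degree has $s_\omega(v)$ equal to its degree, so any edge meeting such a vertex meets the required condition for \emph{every} weighting. Two quick reductions come first. If $G$ is finite, then $G$ is itself a finite induced subgraph of $G$ not isomorphic to $K_2$ (since $G$ has no isolated edge), so applying the hypothesis to $H=G$ already gives the conclusion. We may also assume that every list is nonempty: otherwise the hypothesis fails for a suitable finite induced subgraph on at least three vertices, so there is nothing to prove. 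Hence assume $G$ is infinite with all lists nonempty, and write $L_x$ for the list at $x$ (so $L_x=L_v$ if $x$ is a vertex $v$ and $L_x=L_e$ if $x$ is an edge $e$).

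First I would form the space $X=\prod_{x\in V(G)\cup E(G)}L_x$ of all weightings of $G$ from $L$, each factor a finite discrete space; by Tychonoff's theorem $X$ is compact. For an edge $e=uv$ let $C_e\subseteq X$ be the set of weightings under which $u$ and $v$ have different weighted degrees or both have the same infinite degree, so that the $L$-weightings of $G$ are exactly the points of $\bigcap_{e\in E(G)}C_e$. If $uv$ has an endpoint of infinite degree, then $C_{uv}=X$ by the remark above. If both endpoints of $uv$ have finite degree, then $s_\omega(u)$ and $s_\omega(v)$ depend only on the finitely many coordinates of $\omega$ at $u$, at $v$, and at the edges incident with $u$ or $v$, so $C_{uv}$ is clopen. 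In either case $C_e$ is closed, and by compactness it then suffices to verify the finite intersection property for the family $\{C_e\}_{e\in E(G)}$.

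So fix edges $e_1,\dots,e_n$. Discarding those having an infinite-degree endpoint (for which $C_{e_i}=X$), we may assume each $e_i=u_iv_i$ has both endpoints of finite degree. Let $H$ be the subgraph of $G$ induced on the finite set $\{u_i,v_i:1\le i\le n\}\cup\bigcup_i\bigl(N(u_i)\cup N(v_i)\bigr)$. Since $G$ has no isolated edge, some endpoint of $e_1$ has degree at least $2$ in $G$, so $H$ has at least three vertices and hence $H\not\cong K_2$; the hypothesis then provides an $L_H$-weighting $\omega_H$ of $H$. Because $H$ contains every $G$-neighbour of each $u_i$ and of each $v_i$, the weighted degree of $u_i$ (and of $v_i$) computed in $H$ from $\omega_H$ equals the weighted degree computed in $G$ from \emph{any} extension of $\omega_H$. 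Now extend $\omega_H$ to some $\omega\in X$ by assigning arbitrary list elements to the remaining coordinates. For each $i$, since $u_i$ and $v_i$ have finite degree, membership in $C_{e_i}$ reduces to $s_\omega(u_i)\neq s_\omega(v_i)$, which holds because $\omega_H$ is an $L_H$-weighting. Thus $\omega\in\bigcap_i C_{e_i}$, which proves the finite intersection property; compactness then yields a point of $\bigcap_{e\in E(G)}C_e$, that is, an $L$-weighting of $G$.

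The step I expect to need the most care is the choice of $H$: it must contain the whole closed neighbourhood of every relevant finite-degree vertex, so that all weighted degrees appearing in the argument are genuinely local and are unchanged by how $\omega_H$ is extended; and one must at the same time confirm $H\not\cong K_2$ so that the hypothesis can be invoked --- which is exactly the place where the assumption that $G$ has no isolated edge enters. The only other point to watch is the behaviour at infinite-degree vertices, and that collapses immediately to the observation that there the weighted degree coincides with the degree.
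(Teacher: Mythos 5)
Your compactness argument is correct, and it takes a genuinely different route from the paper. The paper proceeds in two stages: it first proves the statement for locally finite graphs by exhausting each component by balls $B(v,i)$ and extracting a coherent sequence of restricted weightings via K\H{o}nig's Lemma, and then reduces the general case to the locally finite one by deleting every edge joining two vertices of infinite degree and splitting each remaining infinite-degree vertex into infinitely many degree-one clones (which preserves the weighted degrees of all finite-degree vertices). Your proof replaces both stages by a single application of Tychonoff's theorem to $\prod_x L_x$: the observation that an edge meeting an infinite-degree vertex imposes no constraint, plus the fact that the constraint at an edge with two finite-degree endpoints is clopen and supported on the closed neighbourhoods of those endpoints, reduces everything to the finite intersection property, which is exactly where the hypothesis on finite induced subgraphs enters. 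What your version buys is uniformity --- no case split between locally finite and general graphs, no vertex-splitting construction, and no separate treatment of the $K_2$ components that the splitting creates; what the paper's version buys is a more elementary tool (K\H{o}nig's Lemma on a countable tree rather than Tychonoff over an arbitrary index set) and an explicit locally finite lemma. Two small remarks: your induced subgraph $H$ on the closed neighbourhoods in fact has no isolated edge (any isolated edge of $H$ would be an isolated edge of $G$, by the argument you give for $e_1$ applied to all vertices of $H$), which is worth recording because the finite theorems used to verify the hypothesis in the applications require this; and you should note explicitly that the case where all of $e_1,\dots,e_n$ are discarded is handled by the non-emptiness of $X$.
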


For more on the 1-2-3 Conjecture and related problems see the survey of Seamone \cite{seamone}.

\section{General result}

Before we prove Theorem \ref{thm:general} in its full generality, we prove it first for locally finite graphs.
\begin{lem}\label{lem}Let $G$ be a locally finite graph without isolated edge, and $L$ be a list of weights for $G$. Let $H$ be a subgraph of $G$. Let $L_H$ be the set of lists obtained from $L$ by restriction to the set $V(H)\cup E(H)$. If each finite induced subgraph $H$ of $G$ not isomorphic to $K_2$ has a $L_H$-weighting, then $G$ has a $L$-weighting.
\end{lem}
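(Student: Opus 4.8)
The plan is a compactness argument. The only real subtlety is that the weighted degree $s_\omega(v)$ of a vertex is \emph{not} determined by the restriction of $\omega$ to an arbitrary finite subgraph — some incident edges may leave it — but it \emph{is} determined once the subgraph contains the whole closed neighbourhood $N_G[v]$, which is finite precisely because $G$ is locally finite. Note first that, $G$ being locally finite, the clause ``both endvertices have the same infinite degree'' never applies, so an $L$-weighting is simply a choice $\omega(x)\in L_x$ for every $x\in V(G)\cup E(G)$ with $s_\omega(u)\neq s_\omega(v)$ for each edge $uv$; the same holds inside every finite induced subgraph. We may also assume every list in $L$ is nonempty: if $L_x=\emptyset$ one checks that there is a finite induced subgraph $H\not\cong K_2$ containing $x$ (using that $G$ has no isolated edge), and such an $H$ has no $L_H$-weighting, contradicting the hypothesis.

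Next I would equip $X=\prod_{x\in V(G)\cup E(G)}L_x$ with the product topology; each $L_x$ being finite and nonempty, $X$ is nonempty and compact by Tychonoff's theorem. For each edge $e=uv$ of $G$ put $C_e=\{\omega\in X: s_\omega(u)\neq s_\omega(v)\}$. Since $s_\omega(u)$ depends only on the coordinates indexed by $u$ and by the finitely many edges of $G$ incident with $u$, and likewise for $v$, each $C_e$ is clopen. A point of $\bigcap_{e\in E(G)}C_e$ is exactly an $L$-weighting of $G$, so by compactness it suffices to establish the finite intersection property.

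To do so, fix finitely many edges $e_1,\dots,e_n$, let $S$ be the set of their endpoints, and let $H$ be the subgraph of $G$ induced by $N_G[S]=\bigcup_{v\in S}N_G[v]$. Local finiteness makes $H$ finite, and $H$ contains every $e_i$. Moreover $H\not\cong K_2$: if $V(H)=\{u,v\}$ for some $e_i=uv$, then $N_G[u]=N_G[v]=\{u,v\}$, so $uv$ would be an isolated edge of $G$. By hypothesis $H$ has an $L_H$-weighting $\omega_H$; extend it to some $\omega\in X$ by picking arbitrary values from the remaining (nonempty) lists. For each $i$, writing $e_i=uv$, we have $N_G[u],N_G[v]\subseteq V(H)$, hence every edge of $G$ incident with $u$ or $v$ already lies in $H$, so $s_\omega(u)=s_{\omega_H}^{H}(u)\neq s_{\omega_H}^{H}(v)=s_\omega(v)$. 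Thus $\omega\in\bigcap_{i=1}^{n}C_{e_i}$, the finite intersection property holds, and the lemma follows.

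An equivalent route, if one prefers to avoid Tychonoff, is to work component by component: a connected locally finite graph is countable, so one exhausts it by the finite induced subgraphs $H_n=G[N_G[\{v_1,\dots,v_n\}]]$, takes an $L_{H_n}$-weighting of each (discarding the finitely many $n$ with $H_n\cong K_2$), and diagonalizes via K\H{o}nig's lemma to extract a limiting weighting — again using that the weighted degrees of $v_1,\dots,v_n$ are frozen once $N_G[\{v_1,\dots,v_n\}]$ has been filled in. I expect the one point requiring care to be exactly this use of \emph{closed}-neighbourhood finite subgraphs, together with the observation that they are never isomorphic to $K_2$; the compactness packaging itself is routine.
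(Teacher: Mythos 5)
Your proof is correct, and your parenthetical ``equivalent route'' at the end is essentially the paper's actual argument: the paper works component by component, fixes a vertex $v$, exhausts the component by the balls $H_i=H[B(v,i)]$ for $i\geq 2$, and applies K\H{o}nig's Lemma to the (finite, by local finiteness) sets of restricted weightings $\Omega_j'$, verifying at the end that the weighted degree of a vertex of $H_i$ is already frozen in $H_{i+1}$. Your primary route packages the same compactness idea differently: Tychonoff on $\prod_x L_x$ plus the finite intersection property, with the witnessing finite subgraph taken to be $G[N_G[S]]$ for the endpoint set $S$ of the given edges. What this buys you is that you never need to reduce to connected (hence countable) components or to a linearly ordered exhaustion, and you are forced to make explicit two points the paper glosses over: that the relevant finite induced subgraphs are never isomorphic to $K_2$ (precisely because $G$ has no isolated edge), and that all lists are nonempty so that partial weightings extend to points of the product space. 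The key observation driving both versions is identical --- $s_\omega(u)$ is determined once the induced subgraph contains $N_G[u]$, which is finite by local finiteness --- so I would classify this as the same proof in a cleaner, slightly more general wrapper rather than a new idea.
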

\begin{proof}
By assumption if $G$ is finite, then there exists a $L$-weighting of $G$. Therefore, we can assume that $G$ is infinite. Let $H$ be a component of $G$. It is enough to prove that there exists a $L_H$-weighting of each such $H$.

Choose an arbitrary vertex $v$ of $H$. Let $B(v,i)$ be the ball at centre $v$ and radius $i$. Define $H_i=H[B(v,i)]$ for each $i\geq 2$. If $2\leq j <i$, and $\omega_i$ is a $L_{H_i}$-weighting, then define $\omega_{i,j}'=\omega_i|_{H_{j}}$. Define $\Omega_j'=\{\omega_{i,j}'\colon \omega_i\textnormal{ is a }L_{H_i}\textnormal{-weighting of } H_i \textnormal{ for some } i\geq 2\}$. Note that $\omega_{j}'\in \Omega_j'$ is not necessarily a $L_{H_j}$-weighting of $H_{j}$. By assumption  $\Omega_j'$ for each $j\geq 2$ is non-empty. For each $2\leq k<j$ if $\omega_k\in \Omega_k'$, then there exists a $\omega'_j \in \Omega_j'$ such that $\omega_k' \subset \omega_j'$. Each set $\Omega_i'$ is finite, therefore by K\H{o}nig's Lemma there exists a function weighting $\omega$ of $H$ from the set of lists $L_H$ such that for each $i \geq 2$ we have $\omega|_{H_i}\in \Omega_j'$. We will prove that $\omega$ is a $L_H$-weighting of $H$.

Take any edge $tu \in E(H)$. We prove that the weighted degrees of $t$ and $u$ in $H$ is different. There exists $i\geq 2$ such that $t,u \in B(v,i)=V(H_i)$. Let $\omega'=\omega|_{V(H_{i+1}}$. For each $z \in V(H_i)$ the sum $s_{\omega}(z)=s_{\omega'}(z)$ because each addend is the same in both sums. It follows that $s_\omega(u)=s_{\omega'}(u)\neq s_{\omega}(t)=s_{\omega'}(t)$.
\end{proof}

After proving the lemma above, we are ready to prove  Theorem \ref{thm:general}.

\begin{proof}
First, define $G'$ as the graph obtained from $G$ by deleting all the edges between vertices of infinite degree. Notice that if there exists a $L_H$-weighting of each component $H$ of $G'$ which is not isomorphic to $K_2$, then there exists a $L$-weighting of $G$. Let $H$ be a component of $G'$ which is not isomorphic to $K_2$. It is enough to prove that each such $H$ has a $L_H$-weighting.

We construct a graph $H'$ in the following way. Take a vertex $v$ of infinite degree $\kappa$. Let $\{u_i \colon i\leq \kappa\}$ be the set of neighbours of $v$. We replace the vertex $v$ with vertices $v_i$ for $i \leq \kappa$, and we add an edge between vertices $v_i$ and $u_i$ for each $i \leq \kappa$.  We set $L(v_i)=L(v)$. We repeat this procedure for each vertex $v$ of infinite degree. We denote the new set of lists by $L'$. We claim that if there exists a $L'$-weighting of each component of $H'$ not isomorphic to $K_2$, then there exists a 1-2-3 weighting of $H$. 
%A $L'$-weighting of each component of $H'$ not isomorphic to $K_2$ exists by assumption because each vertex of $H'$ has finite degree.

If a component $F$ of $H'$ is isomorphic to $K_2$, then one of its vertices $u$ have degree one in $H$, and the other is obtained by splitting some vertex $v$ of infinite degree. Vertex $v$ is a unique neighbour of $u$ in $H$. Therefore, the sum in $u$ is always different that the sum in $v$ in $H$. If $F$ is not isomorphic to $K_2$, then 
by Lemma \ref{lem} there exists a $L'_F$-weighting $\omega_F$. We define the weighting $\omega$ of $H$ in a following way. If $uw$ is an edge of $H$, then $\omega(uw)=\omega_F(uw)$ for some component $F$ of $H'$ which contains both $u$ and $v$. If $u$ is a vertex of $H$ and $w$ is obtained by splitting the vertex $v$ of $H$, then we put $\omega(uv)=\omega_F(uw)$. If both $u$ and $w$ are obtained by splitting the vertices $v$ and $t$, then we can define $\omega(vt)$ arbitrarily. We will show that the weighting $\omega$ is $L_H$-weighting of $H$.

Let $uw$ be an edge in $H$. If both $u$ and $w$ have finite degree, then $s(u)=s_F(u)\neq s_F(w)=s_F(w)$ for some component $F$ which contains both $u$ and $v$. If $u$ or $w$ has infinite degree, then the sum of $s(w)$ or $s(u)$ is infinite. It remains to notice that if both $w$ and $u$ have infinite but distinct degrees, then their sum is equal to their degrees, and therefore it is distinct.
\end{proof}

\section{Application of the general result}

 Theorem \ref{thm} is obtained from Theorem \ref{thm:general} by setting  $L(v)=\{1\}$ for each vertex $v$, and by setting $L(e)=\{1,2,3\}$. The assumptions of Theorem \ref{thm:general} are now satisfied by Theorem \ref{thm:Keusch}. To prove Theorem \ref{thm:list} we need its finite version.

%\begin{thm}[Kalkowski \cite{kalkowski12}]\label{thm:totalfinite} Let $G$ be a finite graph. Then there exists a weighting of vertices from $\{1,2\}$ and edges from $\{1,2,3\}$ such that for every edge, both its vertices have different total weighted degree.
%\end{thm}

%Theorem \ref{thm:total} is obtained from Theorem \ref{thm:general} by setting each list $L(v)=\{1\}$ for each vertex $v$. The assumptions of Theorem \ref{thm:general} are now satisfied by Theorem \ref{thm:totalfinite}. The list version of 

\begin{thm}[Zhu \cite{zhu1}]\label{thm:listfinite}    Let $G$ be a finite graph without isolated edge, and $L$ be a set of lists for $G$ such that each list for a vertex has one element, and each list for an edge has at least five elements. Then there exists a $L$-weighting of $G$.
\end{thm}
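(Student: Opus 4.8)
The plan is to prove this finite, list-edge statement algebraically, through Alon's Combinatorial Nullstellensatz. Since the hypotheses concern the graph as a whole but the constraints are local to edges, I would first reduce to the case that $G$ is connected and not $K_2$: the exclusion of the isolated edge is essential, because on $K_2$ both endpoints carry their single forced vertex weight and the one shared edge contributes equally to each side, so the two weighted degrees can never be separated. Fix the forced vertex weights $c_v$ (the unique element of $L_v$), introduce one variable $x_e$ per edge, and write $s_v = c_v + \sum_{e \ni v} x_e$ for the weighted degree. Then an $L$-weighting is precisely an assignment $x_e \in L_e$ at which the polynomial $P = \prod_{uv \in E}(s_u - s_v)$ does not vanish. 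Note that the $c_v$ enter $P$ only through its lower-order terms, so they will drop out of the analysis below.

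The key reduction is to the top-degree part. Because $x_{uv}$ cancels inside its own factor $s_u - s_v$, the homogeneous part of $P$ of degree $|E|$ is $\widetilde P = \prod_{uv \in E}(\sigma_u - \sigma_v)$, where $\sigma_v = \sum_{e \ni v} x_e$, and this is independent of the vertex weights. By the Combinatorial Nullstellensatz it suffices to exhibit a monomial $\prod_e x_e^{d_e}$ occurring in $\widetilde P$ with nonzero coefficient, with $\sum_e d_e = |E|$, and with $d_e \le |L_e| - 1$ for every edge; since $|L_e| \ge 5$ the target is a uniform cap $d_e \le 4$. The coefficient of such a monomial has a clean combinatorial meaning: expanding $\widetilde P$, a term is a \emph{selection} $\phi$ choosing from each factor $uv$ one incident-edge variable $x_{\phi(uv)}$ (with sign $+1$ if the chosen edge meets $u$, $-1$ if it meets $v$), and the coefficient of $\prod_e x_e^{d_e}$ is the signed count of selections $\phi$ with multiplicities $|\phi^{-1}(e)| = d_e$. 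This also explains why $K_2$ is the obstruction: there $\widetilde P \equiv 0$.

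The main obstacle, and the real content of the theorem, is to produce such a monomial \emph{uniformly} for every graph, with the per-edge exponent bounded by the absolute constant $4$ rather than by a quantity growing with the maximum degree. A naive orientation of $G$ yields a selection whose multiplicities are controlled only by the out-degrees, which is not universally bounded by $4$; hence the selection must be engineered globally. I would aim to decompose $G$ (every non-$K_2$ component has a vertex of degree at least two, and in fact richer structure) into sparse pieces such as forests and near-matchings on which a selection can be made with small, controlled multiplicities, and then certify nonvanishing of the surviving coefficient by an Alon--Tarsi-type parity computation, comparing the number of even and odd selections giving the chosen monomial (equivalently, Eulerian sub-configurations of an associated orientation).

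I expect the delicate point to be the nonvanishing of the coefficient \emph{after} the exponents have been capped at $4$: forcing all $d_e$ small tends to permit many cancelling selections, so the two requirements pull against each other, and reconciling them simultaneously for all finite graphs is where the argument must be most careful. This tension is exactly what produces the value $5 = 4 + 1$, and why lowering the cap toward the conjectured list-$1$-$2$-$3$ bound of $3$ remains out of reach by this method.
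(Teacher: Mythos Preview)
The paper does not prove this theorem: it is quoted with attribution to Zhu~\cite{zhu1} and used only as a black box, together with Theorem~\ref{thm:general}, to deduce Theorem~\ref{thm:list}. There is therefore no proof in the paper to compare your attempt against.

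Your framework is the right one and is indeed the route taken in Zhu's original argument: reduce to connected $G \not\cong K_2$, encode the problem by the polynomial $P = \prod_{uv \in E}(s_u - s_v)$, pass to the top-degree homogeneous part (where the forced vertex constants drop out and the variable $x_{uv}$ cancels inside its own factor), and invoke the Combinatorial Nullstellensatz by exhibiting a monomial of total degree $|E|$ with every exponent at most~$4$ and nonzero coefficient. Your reading of the coefficients as signed counts of edge-selections, and of $K_2$ as the degenerate case $\widetilde P\equiv 0$, is also correct.

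That said, what you have written is a plan, not a proof. You yourself flag the ``main obstacle'' --- producing such a monomial with uniformly bounded exponents and certifying that its coefficient does not vanish --- and then say only that you ``would aim to decompose $G$ into sparse pieces such as forests and near-matchings'' and hope to finish by ``an Alon--Tarsi-type parity computation.'' But that step \emph{is} the entire content of the theorem; everything preceding it is standard setup going back to the earliest work on list total weightings. Zhu's proof resolves this step through a specific, nontrivial construction within the permanent-index framework, not by the kind of loose decomposition you gesture at, and nothing in your sketch substitutes for it. As written, the proposal stops exactly where the real work begins.
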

The statement of the theorem above is conjectured \cite{wongzhu1} to hold even if we restrict ourselves to lists for edges with only three elements.
Theorem \ref{thm:list} is obtained from Theorem \ref{thm:general} as the assumptions of Theorem \ref{thm:general} are satisfied by Theorem \ref{thm:list}.

\begin{thm}[Wong, Zhu \cite{wongzhu2}]\label{thm:listfinite1}
    Let $G$ be a finite graph, and $L$ be a set of lists for $G$ such that each list for a vertex has at least two elements, and each list for an edge has at least three elements. Then there exists a $L$-weighting of $G$.
\end{thm}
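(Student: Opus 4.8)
The plan is to prove this finite total list statement by the polynomial method, i.e.\ via Alon's Combinatorial Nullstellensatz. Order the vertices $v_1,\dots,v_n$ and introduce a variable $y_i$ for each vertex $v_i$ and a variable $x_e$ for each edge $e$, regarded as the unknown weights $\omega(v_i)$ and $\omega(e)$. Writing $s_i = y_i + \sum_{e\ni v_i} x_e$ for the formal weighted degree, form the graph polynomial
$$P = \prod_{v_iv_j\in E,\ i<j} (s_i - s_j).$$
A proper $L$-weighting is exactly an assignment of values from the lists to the variables for which $P\neq 0$. Each factor is linear, so $\deg P = |E|$, and by the Combinatorial Nullstellensatz it suffices to exhibit a monomial $\prod_e x_e^{a_e}\prod_i y_i^{b_i}$ of total degree $|E|$ whose coefficient in $P$ is nonzero and whose exponents satisfy $a_e\le 2$ for every edge and $b_i\le 1$ for every vertex: these bounds are exactly what the list sizes $|L_e|\ge 3 > 2$ and $|L_{v_i}|\ge 2 > 1$ permit. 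This framework needs no exception for isolated edges, which is why the hypothesis ``without isolated edge'' may be dropped here: for a lone edge $v_iv_j$ the polynomial collapses to $s_i-s_j = y_i-y_j$, whose monomial $y_i$ already has nonzero coefficient with $b_i=1$.

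The substantive step is producing such a monomial with nonzero coefficient for an arbitrary finite graph, and I would compute the relevant coefficients combinatorially. Expanding $P$ amounts to choosing, for each edge $v_iv_j$, which endpoint ``wins'' (contributing $+s_i$ or $-s_j$) --- this is an orientation $D$ of $G$ --- and then selecting from the winning endpoint's $s$-term either its vertex variable or one of its incident edge variables. The exponent of $y_i$ then counts the incoming edges at $v_i$ from which $y_i$ was chosen, and the exponent of $x_e$ counts the total number of selections of $x_e$; the constraints $b_i\le 1$ and $a_e\le 2$ therefore restrict which orientations and selections are admissible, and the coefficient of the target monomial is the signed sum over all admissible choices producing it. This is an Alon--Tarsi-type quantity, so the goal becomes to show that the even and odd contributions do not cancel.

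The main obstacle is precisely this nonvanishing. My plan is to pin down a single favourable target monomial together with a reference orientation for which the signed count organizes into the permanent of a nonnegative matrix built from $G$ and $D$: I would try to choose the orientation so that each vertex variable $y_i$ is used at most once and each edge variable distributes its at-most-two uses between its two endpoints, so that the contributions to the chosen monomial can be matched with the diagonal-type terms of a matrix whose terms all carry the same sign, whence the coefficient equals (up to sign) a permanent with a strictly positive term and is nonzero. Establishing that a suitable orientation and target monomial exist for every graph --- equivalently, that this permanent never vanishes --- is the crux, and I expect to handle it by induction on $|V(G)|$, deleting a vertex of minimum degree and extending a good orientation and monomial of the smaller graph, the degree bound guaranteeing enough free edge and vertex variables at the reintroduced vertex to keep the permanent term positive. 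The delicate point of the induction is controlling the interaction of the reintroduced vertex's $s$-term with those of its already-processed neighbours, which is exactly where the extra slack from vertex lists of size two, rather than one, is essential.
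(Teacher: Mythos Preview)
The paper does not prove this statement at all: Theorem~\ref{thm:listfinite1} is quoted from Wong and Zhu and used as a black box, its only role being to verify the finite hypothesis of Theorem~\ref{thm:general} so that Theorem~\ref{thm:list1} follows. There is therefore no ``paper's own proof'' to compare your proposal against.

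That said, your outline is in the spirit of the original Wong--Zhu argument (Combinatorial Nullstellensatz, target monomial with exponent bounds $a_e\le 2$ and $b_i\le 1$, and a permanent interpretation of the relevant coefficient). Where your write-up remains a plan rather than a proof is exactly the crux you yourself flag: exhibiting, for \emph{every} finite graph, a concrete monomial whose coefficient is nonzero. Your proposed inductive scheme --- delete a minimum-degree vertex, extend an orientation and monomial --- is not obviously workable as stated, because reintroducing a vertex adds $\deg(v)$ new linear factors to $P$ while you have at most one new $y$-variable and some slack on incident $x_e$'s, and you have not specified how to allocate these so that the resulting signed count stays a single-sign permanent rather than a cancelling sum. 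Wong and Zhu's actual argument does not proceed by this kind of vertex induction; it sets up a matrix associated to $G$ whose permanent equals (up to sign) the desired coefficient, and then proves the permanent nonzero via their ``permanent index'' machinery. If you intend to reprove the theorem here, that nonvanishing step needs to be made explicit rather than left as an expectation.
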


Again, Theorem \ref{thm:list1} is obtained from Theorem \ref{thm:general} because the assumptions of Theorem \ref{thm:general} are satisfied by Theorem \ref{thm:listfinite1}.

\bibliographystyle{abbrv}
\bibliography{sources.bib}

\begin{thebibliography}{1}

\bibitem{KLT}
M.~{Karoński}, T.~{Łuczak}, and A.~{Thomason}.
\newblock {Edge weights and vertex colors.}
\newblock {\em J. Comb. Theory Ser. B}, 91:151--157, 2004.

\bibitem{keusch}
R.~{Keusch}.
\newblock {A Solution to the 1-2-3 Conjecture.}
\newblock {\em Preprint}, arXiv:2303.02611, 2023.

\bibitem{przybylowozniak12}
J.~{Przybyło} and M.~{Woźniak}.
\newblock {On a 1,2 conjecture.}
\newblock {\em Discrete Math. Theor. Comput. Sci.}, 12:101--108, 2010.

\bibitem{seamone}
B.~{Seamone}.
\newblock {The 1-2-3 conjecture and related problems: a survey.}
\newblock {\em Preprint}, arXiv:1211.5122, 2012.

\bibitem{wongzhu2}
T.-L. {Wong} and X.~{Zhu}.
\newblock {Every graph is (2,3)-choosable.}
\newblock {\em Combinatorica}, 36:121--127, 2004.

\bibitem{wongzhu1}
T.-L. {Wong} and X.~{Zhu}.
\newblock {Total weight choosability of graphs.}
\newblock {\em J. Graph Theory}, 66:198--212, 2004.

\bibitem{zhu1}
X.~{Zhu}.
\newblock {Every nice graph is (1,5) choosable.}
\newblock {\em J. Comb. Theory Ser. B}, 157:524--551, 2002.

\end{thebibliography}

\end{document}